\documentclass[11pt]{amsart}
\usepackage[left=2.2cm,tmargin=2.5cm,bmargin=2.5cm,right=2.2cm]{geometry}

\usepackage{physics,amsmath,amssymb,hyperref}

\theoremstyle{plain}
\newtheorem{thm}{Theorem}
\newtheorem{lemma}[thm]{Lemma}

\theoremstyle{definition}

\theoremstyle{remark}

\begin{document}

\title{Sets of unit fractions without two members whose average is a unit fraction}
\author{Will Sawin}

\maketitle

\begin{abstract} We show that there is a constant $c>0$ such that, for all sufficiently large $N$, there is a subset $A \subseteq \{1,\dots,N\}$ of size $>cN$ such that for any two distinct elements $a,b$ in $A$, the average of $\frac{1}{a}$ and $\frac{1}{b}$ is not a unit fraction, negatively answering a question of Erd\H{o}s and Graham. This also gives the best known lower bounds on the maximum size of a set of unit fractions without non-trivial  three-term arithmetic progressions.  \end{abstract}

Erd\H{o}s and Graham~\cite[p. 37]{erdos_graham_1980} asked whether, if $A \subseteq \{1,\dots, N\}$ is such that if $a,b\in A$ and $a\neq b$ then $a+b \nmid 2ab$, we must have $\abs{A} =o(N)$.  In this note, we give a negative answer to this question with the following explicit construction:

\begin{thm}\label{main} For a positive integer $N$, let $A_N$ be the set of $a \in \{1,\dots, N\}$ such that if $b \in \{1,\dots N\}$ satisfies $a\neq b$ and $\Omega(b) \leq \Omega(a)$ then $a+b \nmid 2ab$. Then

\begin{enumerate}

\item For $a,b\in A_N$ with $a\neq b$ we have $a+b \nmid 2ab$.

\item There exists a constant $c>0$ such that $\abs{A_N} > c N$ for all sufficiently large $N$.

\end{enumerate} \end{thm}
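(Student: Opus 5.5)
Part (1) follows directly from the definition. Given distinct $a,b\in A_N$, relabel so that $\Omega(b)\le\Omega(a)$. Then $b\in\{1,\dots,N\}$, $b\neq a$ and $\Omega(b)\le\Omega(a)$, so the defining property of $a\in A_N$ gives $a+b\nmid 2ab$. All the work is in part (2).

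For (2) I would first parametrize the bad pairs. Put $d=\gcd(a,b)$, $a=dx$, $b=dy$ with $\gcd(x,y)=1$. Then $a+b\mid 2ab$ becomes $x+y\mid 2dxy$. Since $\gcd(x+y,xy)=1$, this is equivalent to $x+y\mid 2d$. So the bad partners of $a$ correspond to coprime pairs $(x,y)$ with $x\neq y$ such that
\[ a = x\cdot \tfrac{x+y}{\gcd(2,x+y)}\cdot m, \qquad b=\tfrac{ay}{x}\le N, \qquad \Omega(y)\le \Omega(x). \]
In particular $a$ has the coprime divisors $x$ and $(x+y)/\gcd(2,x+y)$, so $\Omega(a)\ge \Omega(x)+\Omega(x+y)-1$. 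The plan is to show that the set of $a\le N$ having such a pair, within a suitable positive-density set of $a$, has size at most a small constant times $N$.

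For the main count I would restrict to $a$ with $\Omega(a)$ in a window $[L-C\sqrt L,\,L-C'\sqrt L]$, where $L=\log\log N$. By Erd\H{o}s--Kac (or Sathe--Selberg) such $a$ form a positive proportion of $[1,N]$. The number of bad $a$ in this window is at most
\[ \sum_{(x,y)} \#\bigl\{m\le \tfrac{2N}{x(x+y)} : \Omega(m)\approx K-\Omega(x)-\Omega(x+y)\bigr\}. \]
I would bound the inner count by a Rankin-type weight $z^{\Omega(\cdot)-K}$ with $0<z<1$, using the Selberg--Delange bound $\sum_{m\le M} z^{\Omega(m)}\ll M(\log M)^{z-1}$. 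I would enforce the constraint $\Omega(y)\le\Omega(x)$ with a second weight $w^{\Omega(x)-\Omega(y)}$, $w\ge1$.

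This reduces everything to a double sum
\[ \sum_{x,y}\frac{(zw)^{\Omega(x)}\,w^{-\Omega(y)}\,z^{\Omega(x+y)}}{x(x+y)}. \]
Without the weights this sum diverges like $\log N$. With the weights, dyadic blocks at scale $e^{u}$ should contribute about $u^{zw+1/w+z-3}$. Estimating this needs an upper-bound sieve for the correlation of $\Omega(y)$ with $\Omega(x+y)$, for example a Shiu-type bound for multiplicative functions over shifted pairs, or Selberg--Delange applied to one variable at a time. I would then optimize $z$ and $w$, the latter near the saddle $w=1/\sqrt z$, so that the total, including the $(\log N)^{z-1}$ factor and $z^{-K}$, is at most $\varepsilon N$ with $\varepsilon$ smaller than the density of the window. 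The density of the window is controlled by choosing $C$ and $C'$.

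The main obstacle is this last balancing. The saving must beat the logarithmic divergence by a constant factor rather than by a power of $\log$. So the exponents from the $\Omega$-weights have to be tracked precisely at the Gaussian scale $\sqrt{\log\log N}$, together with the constraint $b=ay/x\le N$, which forces $y\lesssim x$ when $a$ is of size close to $N$. If the crude Rankin bound loses too much, I would instead split according to the sizes of $\Omega(x)$ and $\Omega(x+y)$ and use the exact local-limit (Sathe--Selberg) asymptotics in each range.
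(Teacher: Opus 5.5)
Part (1) is correct and is the paper's argument. For part (2) there is a genuine gap. A window on the \emph{global} quantity $\Omega(a)$ cannot make your union bound over pairs $(x,y)$ small. This holds whatever $z,w$ you pick, and even if you replace Rankin by exact Sathe--Selberg asymptotics.

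The window only constrains a pair once $j=\Omega(x)+\Omega(x+y)\gtrsim\sqrt{\log\log N}$. Take $x\sim X$ with $\log\log X\le\eta\sqrt{\log\log N}$, where $\eta$ is small in terms of $C,C'$. Typical pairs then have $j\approx 2\log\log X$, so the number of admissible cofactors $m$ is $\gg\rho N/(x(x+y))$, where $\rho$ is the density of the window. Summing over coprime $y\lesssim x$ with $\Omega(y)\le\Omega(x)$ gives $\gg\rho N$ for \emph{each} dyadic block. There are $\asymp e^{\eta\sqrt{\log\log N}}$ such blocks, so your first-moment bound is $\gg\rho N e^{\eta\sqrt{\log\log N}}$, unboundedly larger than the window itself.

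The Rankin version shows the same thing. The global cost $z^{-K}(\log N)^{z-1}=(\log N)^{z-1-\log z}e^{-O(\sqrt{\log\log N})}$ tends to infinity, because $z-1-\log z>0$ for every $z\neq 1$. Meanwhile your block series is bounded below by its first terms, e.g.\ $(x,y)=(2,1)$. That pair is a genuine obstruction: every multiple of $6$ is bad, with partner $b=a/2$. So the difficulty is structural, not a matter of balancing constants.

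What is missing is a scale-by-scale condition together with a sieve. The paper works with the set $S$ of $a\in[\delta N,N]$ that have no prime factor below a large fixed $L$ (the paper's $L$, not your $\log\log N$). These $a$ must also satisfy $\Omega(a,t)\le(1+\epsilon)\log\log t$ for \emph{all} $t\ge e$, where $\Omega(a,t)$ counts prime factors $\le t$. This set still has size $\gg N/\log L$.

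For a pair $(u,v)$ at scale $X$, the constraint is used at $t=u+v$. So the Rankin cost is only $4^{(1+\epsilon)\log\log X}=(\log X)^{(1+\epsilon)\log 4}$, and the cofactor sum $\sum_d 4^{-\Omega(d,u+v)}$ saves roughly $(\log X)^{-3/4}$. This is combined with the bivariate bound $X^2(\log X)^{-7/4}$ for $\sum g_1(u)g_2(v)g_3(u+v)$. That bound needs a genuinely two-variable result such as de la Bret\'eche--Tenenbaum; one-variable Selberg--Delange does not control the joint weights on $v$ and $u+v$. The outcome is that each dyadic block with $X\le N^{1/4}$ contributes about $(\log X)^{(1+\epsilon)\log 4-5/2}$, which is summable since $\log 4<3/2$.

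Your saddle point $z=1/4$, $w=2$ and your exponent $zw+1/w+z-3=-7/4$ coincide exactly with the paper's weights $2^{-\Omega(u)}2^{-\Omega(v)}4^{-\Omega(u+v)}$. But they close the argument only when the Rankin cost is paid at scale $X$ rather than $N$. The sieve then removes every pair with $u+v<L$. So the convergent series over blocks starts at $X\gtrsim L$ and can be made small relative to $|S|$.
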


Here $\Omega(n)$ denotes the number of prime factors counted with multiplicity. 

Part (1) is immediate since of any pair $a,b$, we must have $\Omega(a) \leq \Omega(b)$ or $\Omega(b) \leq \Omega(a)$, so the only difficulty in the proof of Theorem \ref{main} is proving (2).

We have made no effort to compute the constant $c$ in Theorem \ref{main}, and hence have made no effort to optimize the construction. It seems near certain that the best possible lower bounds on the maximum size of $A$ will come from a different set $A$ defined by a more complicated condition. If the construction is optimized and an explicit lower bound is calculated, it would be interesting to compare with known upper bounds~\cite{LeonComment}.

The connection to unit fractions is that $a+b \mid 2ab$ if and only if $\frac{ \frac{1}{a} + \frac{1}{b}}{2}$ is a unit fraction.  So this problem concerns large sets of unit fractions without two distinct members whose average is a unit fraction. It follows immediately that $\{ \frac{1}{a} \mid a\in A\}$ is a set of unit fractions without non-trivial three-term arithmetic progressions, so our argument also gives a negative answer to a question recently raised by Korsky~\cite[Question 1.2]{Korsky} and improves on a construction from \cite{Korsky}.

This is a variant of the question, also asked by Erd\H{o}s and Graham~\cite[p. 37]{erdos_graham_1980}, of the largest size of a set $A \subseteq \{1,\dots, N\}$ such that if $a,b\in A$ with $a\neq b$ then $a+b \nmid ab$, or in other words, about large sets of unit fractions without two members whose sum is a unit fraction. For that problem, the set of all odd $a \in \{1,\dots, N\}$ produces an example of size $\lceil\frac{N}{2} \rceil$, and the main question is whether a substantially larger set exists. Our method also gives a lower bound for that problem, but worse than the bound arising from the set of odd numbers. It is possible that a sufficiently optimized version of our method could give a better bound and hence resolve that question as well.

The key idea of the proof is to restrict attention to a set $S$ of numbers $a$ which lack very small prime factors and do not have many more prime factors of a given size than expected. We count the number of $a \in S $  and compare to a bound for the number of $a$ with $a\in S$ but $a\notin A_N$. To bound the number of $a$ with $a\in S$ but $a\notin A_N$, it suffices to count pairs $a,b$ with $a\neq b$, $\Omega(b) \leq \Omega(a)$, and $a\in S$. This reduces by a change-of-variables to counting $a\in S$ which are divisible by certain integers $u(u+v)$, which can be done using known estimates for sums of nonnegative multiplicative functions. We use a result of de la Bret\'eche and Tenenbaum~\cite{LBT}, but it would likely be possible to instead use the earlier result \cite{Breteche2012} of the same authors, or the result of Matthiesen~\cite{Matthiesen2020}, which even gives an asymptotic for the relevant sums instead of simply an upper bound.

The crucial fact is that the $\Omega(b)\leq \Omega(a)$ and ``do not have many more prime factors of a given size than expected'' conditions make the average number of pairs $(a,b)$ for a given $a$ be bounded, when without these conditions it would, like the average value of Hooley's $\Delta$ function, be a power of $\log\log N$. The ``lacks very small prime factors'' condition lets us further reduce the average number of pairs $a,b$ for a given $a$ as small as we need.

The author must both acknowledge the use of AI in accordance with emerging professional standards and acknowledge works by human mathematicians which were inspirational to the author but which the proof does not logically depend on, and to do this, it is convenient to give a brief narrative of the source of the ideas: The story begins with a calculation of Stijn Cambie~\cite{CambieComment}, who found the largest set $A \subseteq \{1,\dots,500\}$ such that for $a,b\in A$ with $a\neq b$ we have $a+b \nmid 2ab$. The author asked ChatGPT to look for patterns in this set that could give a clue for how to generalize this construction, and it observed that for pairs $a,b$ with $a+b\mid 2ab$, the larger one is usually not in $A$, unless the smaller one is not in $A$ for other reasons, and also described a simple change-of-variables involving $u=\frac{a}{\gcd(a,b)}$, $v=\frac{b}{\gcd(a,b)}$. To get a construction which can be analyzed rigorously, it is natural to drop the ``usually'' and ``unless'' and simply consider the set of positive integers $a$ such that $a+b\nmid 2ab$ for all $b<a$. This turns out to be the set of $a$ which do not have two distinct divisors with ratio less than $2$ (i.e., more or less the set of numbers where the Hooley $\Delta$ function takes the value $1$). A lower bound for this set was found by Stef~\cite{StefThesis}, but it is not strong enough to give a negative answer to the question of Erd\H{o}s and Graham~\cite{erdos_graham_1980}. Examining the argument of \cite{StefThesis}, the author realized it would be more helpful to consider $b$ with $\Omega(b)\leq \Omega(a)$ instead of $b\leq a$. The strategy of proof then follows \cite{StefThesis}, suitably modified to apply to this problem. ChatGPT was also used for reference search and proofreading.

Specifically, the idea to restrict attention to the set $S$ is analogous to the strategy in \cite{StefThesis} to restrict attention to a certain set $U_{\alpha, T}$, and Lemma \ref{S-size} counting $S$ is analogous to \cite[Lemma 4.4]{StefThesis} counting $U_{\alpha,T}$. After restricting to $S$, we reduce in \eqref{bound-we-need} from counting $a$ divisible by $u(u+v)$ for certain pairs $u,(u+v)$ to counting the average number of pairs $u,v$ with $u(u+v)$ dividing $a$, which is analogous to \cite[p. 22, Demonstration, first displayed equation]{StefThesis}. After this, the argument needs to be different from \cite{StefThesis} to handle the crucial condition $\Omega(b)\leq \Omega(a)$, which becomes $\Omega(v) \leq \Omega(u)$.

The author was supported by NSF grant DMS-2502029 and was a Sloan Research Fellow while working on this manuscript.

\section{Proof}

Recall that $A_N$ is the set of $a \in \{1,\dots, N\}$ such that if $b \in \{1,\dots N\}$ satisfies $a\neq b$ and $\Omega(b) \leq \Omega(a)$ then $a+b \nmid 2ab$.

\begin{lemma}\label{divisibility-criterion} For positive integers $a,b$, we have $a+b\mid 2ab$ if and only if there are coprime positive integers $u,v$ with $u(u+v) \mid 2a$ and $b =av/u$. \end{lemma}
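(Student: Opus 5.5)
The natural approach is the change of variables suggested in the introduction: write $g=\gcd(a,b)$, $u=a/g$, $v=b/g$, so that $u,v$ are coprime positive integers and $b=av/u$ automatically. The work is then to show that, under this substitution, the condition $a+b\mid 2ab$ is equivalent to $u(u+v)\mid 2a$. Since $(u,v)$ is determined by $(a,b)$ through the reduced fraction $b/a=v/u$, the ``there exist'' in the statement is really a statement about this specific pair, which makes both directions straightforward.

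\emph{Forward direction.} Suppose $a+b\mid 2ab$. With $a=gu$ and $b=gv$ we have $a+b=g(u+v)$ and $2ab=2g^2uv$, so the hypothesis becomes $u+v\mid 2guv$. Because $\gcd(u,v)=1$, we get $\gcd(u+v,u)=\gcd(v,u)=1$ and similarly $\gcd(u+v,v)=1$, so $u+v$ is coprime to $uv$. Hence $u+v\mid 2g$. Multiplying by $u$ gives $u(u+v)\mid 2gu=2a$, and by construction $b=gv=av/u$.

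\emph{Reverse direction.} Suppose $u,v$ are coprime, $u(u+v)\mid 2a$, and $b=av/u$. In particular $u\mid 2a$; we want $u\mid a$ so that we can write $a=gu$. Set $a'=2a/u$, which is an integer divisible by $u+v$. Then
\[
2ab=\frac{2a^2v}{u}=a\,v\,a' \quad\text{and}\quad a+b=\frac{a(u+v)}{u},
\]
so
\[
\frac{2ab}{a+b}=\frac{2auv}{u(u+v)}\cdot\frac{2a}{u}\cdot\frac{u}{2a}\cdots
\]
More cleanly, $\dfrac{2ab}{a+b}=\dfrac{2av}{u+v}=v\cdot\dfrac{2a}{u+v}$. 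Since $u+v\mid u(u+v)\mid 2a$, the quotient $\dfrac{2a}{u+v}$ is an integer, so $a+b\mid 2ab$. This argument needs no integrality issue beyond the requirement that $b$ be a positive integer, which is part of the hypothesis ($b$ is a given positive integer).

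\emph{Main obstacle.} There is no serious obstacle; this is elementary. The only points needing care are the coprimality of $u+v$ with $uv$ in the forward direction, and, in the reverse direction, making sure we never divide by something we have not justified. Computing $2ab/(a+b)$ directly as $2av/(u+v)$ avoids the worry about whether $u\mid a$ or only $u\mid 2a$.
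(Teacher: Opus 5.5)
Your proof is correct and essentially the paper's: the forward direction is identical (reduce to $u+v\mid 2guv$ and use $\gcd(u+v,uv)=1$). Your reverse direction computes $\frac{2ab}{a+b}=v\cdot\frac{2a}{u+v}$ directly, a slightly more streamlined variant of the paper's route through $u=a/\gcd(a,b)$ and $u+v\mid 2\gcd(a,b)$, and it shows that only $u+v\mid 2a$ is needed there, without coprimality. In a write-up, delete the aborted display ending in $\cdots$ that precedes ``More cleanly''.
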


\begin{proof} Given $a,b$, set $u=a/\gcd(a,b)$ and $v= b/\gcd(a,b)$. Then certainly $u$ and $v$ are coprime positive integers and $b=av/u$.

 Then $a+b \mid 2ab$ if and only if $u+v \mid 2 uv \gcd(a,b)$ but $u$ and $v$ are coprime so this occurs if and only if $u+v \mid 2 \gcd(a,b)$ which implies $u+v \mid 2a$ and thus $u(u+v) \mid 2a$. 
 
 Conversely, if $u$ and $v$ are coprime and $b=av/u$ then $u=a/\gcd(a,b)$ and $v =b/\gcd(a,b)$. If $u (u+v)\mid 2a$ then $(u+v)\mid 2a$ and hence $(u+v) \mid 2b$ since $2b= 2av/u$ and $u$ is coprime to $u+v$. Thus $u+v \mid 2 \gcd(a,b)$ and therefore $a+b\mid 2ab$. \end{proof}

 \begin{lemma}\label{in-A-criterion} For $a\in \{1,\dots,N\}$, we have $a\in A_N$ if  $u(u+v) \nmid 2a $ for any pair $u,v$ of coprime positive integers with $v \leq uN/a$, $\Omega(v)\leq \Omega(u)$, and $(u,v)\neq (1,1)$.\end{lemma}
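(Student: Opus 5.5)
We prove the contrapositive. Suppose $a \in \{1,\dots,N\}$ and $a\notin A_N$. Then we will produce a pair $(u,v)$ of coprime positive integers with $u(u+v)\mid 2a$, $v\le uN/a$, $\Omega(v)\le\Omega(u)$ and $(u,v)\neq(1,1)$.

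Since $a\notin A_N$, by definition there is some $b\in\{1,\dots,N\}$ with $b\neq a$, $\Omega(b)\le\Omega(a)$, and $a+b\mid 2ab$. Apply Lemma \ref{divisibility-criterion}, specifically the explicit choice made in its proof: set $u=a/\gcd(a,b)$ and $v=b/\gcd(a,b)$. These are coprime positive integers, and the lemma gives $u(u+v)\mid 2a$ and $b=av/u$.

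We now verify the remaining three conditions one at a time.

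\emph{The bound on $v$.} From $b=av/u$ we get $v=bu/a$. Since $b\le N$, this gives $v\le uN/a$.

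\emph{The pair is not $(1,1)$.} If $(u,v)=(1,1)$, then $b=a$, contradicting $b\neq a$.

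\emph{The condition on $\Omega$.} Write $g=\gcd(a,b)$, so that $a=gu$ and $b=gv$. Since $\Omega$ is completely additive, $\Omega(a)=\Omega(g)+\Omega(u)$ and $\Omega(b)=\Omega(g)+\Omega(v)$. The hypothesis $\Omega(b)\le\Omega(a)$ therefore becomes $\Omega(v)\le\Omega(u)$.

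So $(u,v)$ is a pair of the forbidden type, which proves the contrapositive and hence the lemma.

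There is no real obstacle here. The one point needing care is to use the specific choice $u=a/\gcd(a,b)$, $v=b/\gcd(a,b)$, rather than an arbitrary pair supplied by Lemma \ref{divisibility-criterion}. That choice is what lets us cancel the common factor $g$ in the $\Omega$ comparison. In fact this costs nothing: for coprime $u,v$ the relation $b=av/u$ already forces $u,v$ to be these quotients, as noted in the converse direction of that lemma's proof.
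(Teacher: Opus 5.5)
Your proof is correct and follows the paper's argument: you fix a witness $b$, take $u=a/\gcd(a,b)$ and $v=b/\gcd(a,b)$ via Lemma \ref{divisibility-criterion}, and read off the three conditions from $b=av/u$. As a minor aside, the specific choice of $u,v$ is not actually needed for the $\Omega$ step, because $bu=av$ together with complete additivity already gives $\Omega(b)-\Omega(a)=\Omega(v)-\Omega(u)$ for any such pair.
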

 
 Lemma \ref{in-A-criterion} can be made into an ``if and only if'' statement by adding the condition $u\mid a$. Since this condition is not helpful for our argument, we drop it. It would also be possible to drop the $u,v$ coprime condition, though this would make some later calculations messier.

\begin{proof} This follows immediately from Lemma \ref{divisibility-criterion}. Indeed, we suppose $a\notin A_N$, fix a witness $b$, and observe that, since $b=av/u$, that $b \leq N$ implies $v \leq u N/a$,  that $\Omega(b) \leq \Omega(a)$ implies $\Omega(v) \leq \Omega(u)$, and that $a\neq b$ implies $(u,v)\neq (1,1)$.\end{proof}

 In the remainder of the argument, we must show that there are many $a \in [1,N]$ with $2a$ not divisible by any $u(u+v)$ satisfying the conditions of Lemma \ref{in-A-criterion}. To do this, we will find a set of numbers $S$ such that the average over $a\in S$ of the number of $u,v$ pairs satisfying the conditions of Lemma \ref{in-A-criterion} with $u(u+v)\mid 2a$ is small. 
 
We fix parameters $\epsilon, \delta \in (0,1)$ and   $L>2$, and consider the set $S$ of natural numbers $a\in [\delta N, N]$, not divisible by any prime $<L$, with $\Omega(a,x) \leq (1+\epsilon )\log\log x$ for all $x \geq e$, where $\Omega(a,x)$ denotes the number of prime factors of $a$ that are less than or equal to $x$ counted with multiplicity.

\begin{lemma}\label{S-size} For $L$ sufficiently large with respect to $\epsilon$ and $\delta$ and $N$ sufficiently large, we have\[\abs{S} \geq(  \frac{1}{2}+o(1) )(1-\delta) N \prod_{p<L} (1-p^{-1})\] where $o(1)$ goes to $0$ as $N$ goes to $\infty$ with the other parameters fixed.
\end{lemma}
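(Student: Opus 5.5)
The plan is to start from the set $T$ of integers $a\in[\delta N,N]$ with no prime factor $<L$, and then discard the $a$ violating $\Omega(a,x)\le(1+\epsilon)\log\log x$ for some $x\ge e$. By the fundamental lemma of sieve theory (or simply inclusion--exclusion over the finitely many primes below $L$, since $L$ is fixed while $N\to\infty$), we have
\[\abs{T}=(1+o(1))(1-\delta)N\prod_{p<L}(1-p^{-1}).\]
So it suffices to show that the number of $a\in T$ failing the $\Omega$ condition is at most $(\tfrac12+o(1))\abs{T}$, once $L$ is large in terms of $\epsilon,\delta$.

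A failure requires some $x$ with $\Omega(a,x)>(1+\epsilon)\log\log x$. Since $a$ has no prime factor below $L$, we have $\Omega(a,x)=0$ for $x<L$, so only $x\ge L$ matter. It also suffices to take $x$ prime: $\Omega(a,x)$ only jumps at primes, and $\log\log x$ is increasing, so the worst case is $x=p$ a prime dividing $a$. I would dyadically decompose over $x\in[e^{2^k},e^{2^{k+1}}]$ with $2^k\gtrsim\log L$. On such a block, a failure forces $\Omega(a,y_{k+1})>(1+\epsilon)(k\log 2+O(1))$ with $y_{k+1}=e^{2^{k+1}}$. The right side is $\geq(1+\epsilon/2)\log\log y_{k+1}$ once $k$ is large, which is arranged by taking $L$ large, at the cost of shrinking $\epsilon$ slightly.

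Each block is handled with a Rankin/exponential-moment bound. For $1<z<2$, which keeps the factors at $p\geq L>2$ convergent, the density of $a\in T$ weighted by $z^{\Omega(a,y)}$ is at most
\[\prod_{L\le p\le y}\frac{1-p^{-1}}{1-zp^{-1}}\ll \exp\Big((z-1)\sum_{L\le p\le y}p^{-1}+O(L^{-1})\Big)\le C\,(\log y/\log L)^{z-1}\]
relative to the density of $T$. Here $y\le N$ is allowed, since a standard upper bound for sums of nonnegative multiplicative functions over an interval of length $\asymp N$ (Shiu or Hall--Tenenbaum type), or even just summing the multiplicative weight against $N/d$, gives this uniformly. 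The trivial case $y>N$ reduces to $y=N$. Markov's inequality then bounds the proportion of $a\in T$ with $\Omega(a,y)\ge(1+\epsilon/2)\log\log y$ by
\[C(\log y)^{z-1-(1+\epsilon/2)\log z}(\log L)^{1-z}.\]
Choosing $z=1+\epsilon/4$ makes the exponent $-\eta<0$ with $\eta=\eta(\epsilon)$. Summing over dyadic $k$ with $2^k\ge\log L$ gives a geometric series $\ll_\epsilon(\log L)^{-\eta}(\log L)^{1-z}$, which is $<\tfrac12$ for $L$ large in terms of $\epsilon$. The $\delta$-dependence enters only through the interval $[\delta N,N]$ and the implied sieve constants, which is why $L$ depends on both parameters.

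The main obstacle is making the moment bound uniform: we need it for all $y$ up to $N$ simultaneously on the interval $[\delta N,N]$, while the sifting condition $p\nmid a$ for $p<L$ is in force, with constants independent of $N$. I would handle this by writing $z^{\Omega(a,y)}\mathbf 1_{(a,P(L))=1}$ as a nonnegative multiplicative function bounded by $z^{\Omega(a)}$ with $z<2$, and applying Shiu's theorem, or the Halberstam--Richert form of the multiplicative-function sum. This gives an upper bound of the shape $\frac{N}{\log N}\exp\big(\sum_{p\le N}f(p)/p\big)$, which reproduces exactly the Euler product above with an absolute constant $C$. That constant is absorbed because the ratio above already carries the decaying factor $(\log L)^{-\eta}$.
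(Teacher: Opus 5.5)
Your proposal is correct and is essentially the paper's argument: sieve to get $|T|$, reduce failure of the $\Omega$-condition to a union over a sparse sequence of scales $y\ge L$, apply Markov's inequality to $z^{\Omega(a,y)}$ with the moment bounded via Tenenbaum's Theorem III.3.5 (Shiu's bound) by $(\log y/\log L)^{z-1}$ relative to $|T|$, and sum a convergent series over scales that is dominated by the scales near $L$. The differences are cosmetic: you discretize dyadically in $\log x$ and absorb the additive loss by taking $L$ large with $z=1+\epsilon/4$, while the paper uses $\log\log x_k=(1+\epsilon/3)^k$ with $z=1+\epsilon/3$. Two small points: your constant $C$ depends on $\delta$, as you implicitly acknowledge, and your aside about simply summing the weight against $N/d$ would need an upper-bound sieve to keep the factor $\prod_{p<L}(1-p^{-1})$.
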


\begin{proof} The set of natural numbers $a \in [\delta N, N]$ not divisible by any prime $<L$ has size $(1+o(1)) (1-\delta) N \prod_{p<L} (1-p^{-1})$, so it suffices to show that the set of natural numbers $a\in [\delta N, N]$, not divisible by any prime $<L$, with $\Omega(a,x) > (1+\epsilon )\log\log x$ for some $x \geq e$ has size  $\leq (\frac{1}{2}+o(1)) (1-\delta) N \prod_{p<L} (1-p^{-1})$. 

We may assume $\epsilon<3$. For $\overline{\epsilon}=\epsilon/3$ so that $(1+\overline{\epsilon}^2<(1+\epsilon)$. Let $x_k= e^{e^{ (1+\overline{\epsilon})^k}}$. Then for any $x$, for $k$ minimal such that $x_k\geq x$ , we have $(1+\epsilon) \log \log x \geq (1+\overline{\epsilon})\log \log x_k)$ so that \[ \sum_{ \substack{ a\in [\delta N, N] \\ p \nmid a \textrm{ for }p < L \\ \Omega(a,x) > (1+\epsilon )\log\log x \textrm{ for some } x\geq e}}1  \leq \sum_{ \substack{ a\in [\delta N, N] \\ p \nmid a \textrm{ for }p < L \\ \Omega(a,x) > (1+\overline{\epsilon} )\log\log x_k \textrm{ for some } k\geq 0}}  1\leq  \sum_{k, x_k \geq L} \sum_{ \substack{ a\in [\delta N, N] \\ p \nmid a \textrm{ for }p < L \\ \Omega(a,x_k) > (1+\overline{\epsilon} )\log\log x_k }}  1\]
\[ \leq \sum_{k, x_k \geq L}  (1+\overline{\epsilon})^{-(1+\overline{\epsilon} )\log\log x_k } \sum_{ \substack{ a\in [\delta N, N] \\ p \nmid a \textrm{ for }p < L}} (1+\overline{\epsilon})^{\Omega(a,x_k)}. \]
The sum $\sum_{ \substack{ a\in [\delta N, N] \\ p \nmid a \textrm{ for }p < L}} (1+\overline{\epsilon})^{\Omega(a,x_k)}$ is the sum of a multiplicative function $f_k$ which is $ (1+\overline{\epsilon})^{\Omega(a,x_k)}$ if $p \nmid a$ for $p<L$ and $0$ otherwise. We have $f_k\geq 0$, $\sum_{p \leq y} f_k(p) \log p \ll y$, and $\sum_p \sum_{\nu \geq 2} \frac{ f_k(p^\nu) \log (p^\nu)}{p^\nu} \ll 1$, where the upper bounds follow from $f_k(a) \leq(1+\overline{\epsilon})^{\Omega(a)}$. Hence by \cite[Theorem III.3.5]{Tenenbaum2015} we have

\[\sum_{ \substack{ a\in [\delta N, N] \\ p \nmid a \textrm{ for }p < L}} (1+\overline{\epsilon})^{\Omega(a,x_k)}\ll  N \prod_{p \leq N} \begin{cases} 1- p^{-1} & \textrm{if } p<L \\ \frac{1}{1- (1+\overline{\epsilon}) p^{-1}} - p^{-1} & \textrm{if } L \leq p \leq x_k \\ \frac{1}{1-p^{-1}} - p^{-1}  & \textrm{if } p >x_k \end{cases}\]
\[ \ll N   (\log L)^{-1} \left( \frac{\log x_k}{\log L} \right)^{\overline{\epsilon}}  = N (\log x_k) ^{ \overline{\epsilon}} (\log L)^{-1-\overline{\epsilon}}\]
(absorbing a uniformly bounded Euler product into the implied constant) so that
\[\sum_{ \substack{ a\in [\delta N, N] \\ p \nmid a \textrm{ for }p < L \\ \Omega(a,x) > (1+\epsilon )\log\log x \textrm{ for some } x\geq e}}1 \]
\[ \ll \sum_{k, x_k \geq L} N (\log x_k)^{ \overline{\epsilon} - (1+\overline{\epsilon} ) \log (1+\overline{\epsilon})} (\log L)^{-1- \overline{\epsilon}}.\]

As $x_k=e^{e^{ (1+\overline{\epsilon})^k}}$ we have $\log x_k= e^{ (1+\overline{\epsilon})^k}$ which increases superexponentially with $k$. Since $\overline{\epsilon} - (1+\overline{\epsilon} ) \log (1+\overline{\epsilon}) <0$, the sum over $k$ is dominated by its first term, in which $\log x_k \geq \log L$, so we have

\[ \sum_{ \substack{ a\in [\delta N, N] \\ p \nmid a \textrm{ for }p < L \\ \Omega(a,x) > (1+\epsilon )\log\log x \textrm{ for some } x\geq e}}1  \ll  N  (\log L)^{-1 - (1+\overline{\epsilon} ) \log (1+\overline{\epsilon})} \]
\[\ll (\log L)^{- (1+\overline{\epsilon} ) \log (1+\overline{\epsilon})} (1-\delta) N \prod_{p<L} (1-p^{-1})\]
since $\prod_{p<L} (1-p^{-1})^{-1} \ll \log L$ and $(1-\delta)^{-1} \ll 1$. The quantity $(\log L)^{- (1+\overline{\epsilon} ) \log (1+\overline{\epsilon})} $ may be made arbitrarily small by taking $L$ sufficiently large, completing the proof. \end{proof}

Our goal will be to choose $\epsilon,\delta,L$ so that
\begin{equation}\label{bound-we-need}\sum_{\substack{u,v>0 \\ \textrm{coprime}\\ (u,v) \neq (1,1) \\ v \leq u \delta^{-1} \\ \Omega(v) \leq \Omega(u)}} \frac{ \abs{ \{a\in S \mid u(u+v) \mid 2a\}}}{ \abs{S}} <\frac{1}{2} \end{equation}
and thus we  have $\abs{A_N} > \frac{\abs{S}}{2}$ which by Lemma \ref{S-size} is $>cN$ for all large $N$ and some $c>0$.  
We first bound the summand $\frac{ \abs{ \{a\in S \mid u(u+v) \mid 2a\}}}{ \abs{S}}$.

\begin{lemma}\label{inner-sum-bound} For positive integers $u,v$ with $(u,v)\neq (1,1)$, if some $p \in (2,L)$ divides $u(u+v)$, or $4$ divides $u(u+v)$, or $u(u+v)>2N$, we have
\[ \abs{ \{a\in S\mid u(u+v) \mid 2a\}}=0,\]
and otherwise, we have
\[ \abs{ \{a\in S \mid u(u+v) \mid 2a\}}\]\[ \ll  4^{- \Omega(u)} 4^{- \Omega(u+v)} 4^{ (1+\epsilon) \log \log (u+v)}   \frac{N}{u (u+v)}  \left( (\log (u+v))^{-\frac{3}{4}} (\log L)^{- \frac{1}{4}}  +\bigl( \log \frac{3N}{u (u+v)}\bigr)^{- \frac{3}{4}}\right).\]
\end{lemma}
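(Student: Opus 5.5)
The plan is to treat the vanishing cases by elementary observations, and then to bound the remaining count by a Rankin-type weighting combined with the upper bound for sums of nonnegative multiplicative functions \cite[Theorem III.3.5]{Tenenbaum2015}, as in the proof of Lemma \ref{S-size}.

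\emph{Vanishing cases.} Since $L>2$, every $a\in S$ is odd and has no prime factor $<L$. So $2a$ is divisible by exactly one power of $2$, and by no odd prime $p<L$. Hence if $4\mid u(u+v)$, or some $p\in(2,L)$ divides $u(u+v)$, then $u(u+v)\nmid 2a$. Also $2a\le 2N$, so $u(u+v)>2N$ makes divisibility impossible.

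\emph{Reduction to a multiplicative sum.} In the remaining case, let $m$ be the odd part of $u(u+v)$, so $m\in\{u(u+v),u(u+v)/2\}$. Then $u(u+v)\mid 2a$ if and only if $m\mid a$, and we write $a=mn$ with $n\le X:=N/m$. Since $u$ and $u+v$ are coprime, $\Omega(m)\ge \Omega(u)+\Omega(u+v)-1$. Every prime factor of $m$ is $\le y:=u+v$. Applying the defining condition of $S$ at $x=y$ gives
\[\Omega(n,y)\le (1+\epsilon)\log\log y-\Omega(u)-\Omega(u+v)+1.\]
(For $y<e$ we have $(u,v)=(1,1)$, which is excluded, and small $y$ is absorbed into constants.) Bounding the indicator of this inequality by $4^{\text{RHS}-\Omega(n,y)}$, the count is
\[\ll 4^{-\Omega(u)-\Omega(u+v)}4^{(1+\epsilon)\log\log(u+v)}\sum_{\substack{n\le X\\ p\mid n\Rightarrow p\ge L}}4^{-\Omega(n,y)}.\]

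\emph{Bounding the multiplicative sum.} The summand $g(n)$ is multiplicative and nonnegative. It has $g(p)=1/4$ for $L\le p\le y$, $g(p)=1$ for $p>y$, and $g(p)=0$ for $p<L$, so the hypotheses of \cite[Theorem III.3.5]{Tenenbaum2015} hold uniformly. For $X\ge 2$ that theorem gives
\[\sum_{n\le X}g(n)\ll \frac{X}{\log X}\exp\Bigl(\sum_{p\le X}\frac{g(p)}{p}\Bigr).\]
\begin{itemize}
\item If $y\le X$, Mertens gives $\exp(\cdots)\ll (\log y/\log L)^{1/4}(\log X/\log y)$. This yields $X(\log y)^{-3/4}(\log L)^{-1/4}$.
\item If $y>X$, only primes in $[L,X]$ contribute, and we get $X(\log X)^{-1/4-1}(\log X)^{1/4}\cdot(\log L)^{-1/4}\le X(\log X)^{-3/4}$.
\end{itemize}
Since $m\asymp u(u+v)$, we have $X\asymp N/(u(u+v))$ and $\log X\gg \log\frac{3N}{u(u+v)}$ once $X\ge 2$. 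For $X<2$, the trivial bound $\#\{n\le X\}\le X$ suffices, because $\log\frac{3N}{u(u+v)}\ge\log\frac32$ is bounded below (recall $u(u+v)\le 2N$). Summing the two cases gives the claimed bound.

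The main obstacle I expect is making the application of \cite[Theorem III.3.5]{Tenenbaum2015} genuinely uniform in $y$, $L$ and $X$. In particular, I need to check that the implied constant does not depend on where the cutoff $y$ sits, and to handle the boundary regimes $X$ near $2$ and $y$ near $L$ cleanly. The latter regime is why both terms are kept, rather than only $\min(\log y,\log X)$.
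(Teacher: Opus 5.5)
Your proposal is correct and is essentially the paper's argument. It uses a Rankin weight $4^{R-\Omega(n,u+v)}$ from the defining condition of $S$ at $x=u+v$, then \cite[Theorem III.3.5]{Tenenbaum2015} with Mertens' estimates, split according to whether $u+v$ lies below or above the length $\asymp N/(u(u+v))$ of the range; writing $a=mn$ with $m$ the odd part of $u(u+v)$, instead of $2a=u(u+v)d$, is only cosmetic.

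There are three minor fixes, none of which is a gap:
\begin{enumerate}
\item For $X<2$, the fact you need is that $\log\frac{3N}{u(u+v)}<\log 6$ is bounded \emph{above}, since $N/(u(u+v))\le X<2$. That gives $\bigl(\log\frac{3N}{u(u+v)}\bigr)^{-3/4}\gg 1$; the lower bound $\log\frac32$ you cite only shows the expression is well defined.
\item In the case $y>X$, the intermediate bound should read $X(\log X)^{-1}(\log X/\log L)^{1/4}$.
\item Coprimality of $u$ and $u+v$ is not a hypothesis of this lemma, but you do not need it: $\Omega$ is completely additive and $v_2(u(u+v))\le 1$.
\end{enumerate}
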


The base $4$ of the exponentials is not arbitrary. Different values could be chosen, leading to different exponents of the log terms, but this one leads to the best power of log in the final estimate, where it is crucial that a certain power of log is negative. 

\begin{proof} If some $p \in (2,L)$ divides $u(u+v)$ or $4$ divides $u(u+v)$ then it is not possible for $u(u+v)$ to divide $2a$ for $a$ coprime to all primes $p<L$ and hence the sum vanishes. Otherwise \[\abs{ \{a\in S \mid u(u+v) \mid 2a\}} \] \[=  \sum_{ \substack{  d \in [ \delta \frac{2N}{u(u+v)}, \frac{2N}{u(u+v)}] \\  p\nmid d \textrm{ for all } p\in (2,L) , u(u+v)d \equiv 2\bmod 4 \\  \Omega( u,x) + \Omega(u+v, x) + \Omega(d,x) -1\leq (1+\epsilon) \log \log x \textrm{ for all } x}}1 \]
\[\leq\sum_{ \substack{  d \in [ \delta \frac{2N}{u(u+v)}, \frac{2N}{u(u+v)}] \\  p\nmid d \textrm{ for all } p\in (2,L) \\  \Omega( u) + \Omega(u+v) + \Omega(d,u+v) -1\leq (1+\epsilon) \log \log (u+v)}} 1 \]
\[ \leq 4^{- \Omega(u)} 4^{- \Omega(u+v)} 4^{ (1+\epsilon) \log \log (u+v)+1} \sum_{ \substack{  d \in [ \delta \frac{2N}{u(u+v)}, \frac{2N}{u(u+v)} ] \\ p\nmid d \textrm{ for all } p\in (2,L)}} 4^{- \Omega(d,u+v)} \]

The sum $\sum_{ \substack{  d \in [ \delta \frac{2N}{u(u+v)}, \frac{2N}{u(u+v)} ] \\ p\nmid d \textrm{ for all } p\in (2,L)}} 4^{- \Omega(d,u+v)}$ is the sum of a multiplicative function which is $4^{- \Omega(d,u+v)} $ if $p\nmid c$ for $p\in (2,L) $ and $0$ otherwise. This multiplicative function is certainly nonnegative and $1$-bounded. Hence by \cite[Theorem III.3.5]{Tenenbaum2015} we have 

\[ \sum_{ \substack{  d \in [ \delta \frac{2N}{u(u+v)}, \frac{2N}{u(u+v)} ] \\ p\nmid d \textrm{ for all } p \in (2,L) }} 4^{- \Omega(d,u+v)}   \ll  \frac{2N}{u(u+v)} \prod_{p \leq \frac{2N}{u (u+v)} }\begin{cases} 1-p^{-1} & \textrm{if } p\in (2,L)  \\  \frac{1}{1 - \frac{1}{4} p^{-1}} - \frac{1}{p} & \textrm{if } p=2 \textrm{ or }  L \leq p \leq (u+v) \\ \frac{1}{1-p^{-1}} - p^{-1}  & \textrm{if } p>  (u+v) \end{cases}\]
\[ \ll \frac{N}{u(u+v)} \prod_{p \leq \frac{2N}{u (u+v)} , p \leq u+v }\begin{cases} 1-p^{-1} & \textrm{if } p<L \\  1 - \frac{3}{4} p^{-1}  &\textrm{if } L \leq p \end{cases} \]

We have $ \prod_{p \leq \frac{2N}{u (u+v)} , p \leq  (u+v)}(1-\frac{3}{4} p^{-1}) \ll  (\log \min (u+v ,  \frac{3N}{u (u+v)}))^{ -\frac{3}{4}}$ (where we have increased $2$ to $3$ to avoid division by $0$ if $2N=u(u+v)$) and multiplying by $ \prod_{p \leq \frac{2N}{u (u+v)} , p \leq u+v, p <L} \frac{1-p^{-1}}{1-\frac{3}{4}p^{-1}}$ gives an additional factor of $\ll (\log \min  ( u+v, \frac{3N}{u (u+v)}, L))^{-1/4}$ so 
\[\prod_{p \leq \frac{2N}{u (u+v)} , p \leq u+v }\begin{cases} 1-p^{-1} & \textrm{if } p<L \\  1 - \frac{3}{4} p^{-1} & \textrm{if } L \leq p \end{cases}  \]
\[ \ll   \Bigl( (\log (u+v))^{-\frac{3}{4}}+ \bigl(\log \frac{3N}{u (u+v)}\bigr)^{- \frac{3}{4}}\Bigr)  \Bigl( (\log L)^{-\frac{1}{4}} + (\log (u+v))^{-\frac{1}{4}} + \bigl(\log \frac{3N}{u (u+v)}\bigr)^{- \frac{1}{4}} \Bigr) \]
\[ \ll  (\log (u+v))^{-\frac{3}{4}} (\log L)^{- \frac{1}{4}}  +\bigl( \log \frac{3N}{u (u+v)}\bigr)^{- \frac{3}{4}}\]
since we must have $ u+v>2$ so  $p\nmid u+v$ for $p \in (2,L)$ and $ 4\nmid u+v$ implies $u+v \geq L$, so if $u+v \leq \frac{2N}{u (u+v)}$ then the $(\log (u+v))^{-\frac{3}{4}} (\log L)^{- \frac{1}{4}} $ term bounds everything and if $u+v \geq \frac{2N}{u (u+v)}$ then the $(\log \frac{3N}{u (u+v)})^{- \frac{3}{4}}$ term bounds everything. \end{proof}

 Consider the three multiplicative functions 
\[ g_1(n) =\begin{cases} 2^{-\Omega(n)} & \textrm{if }p \nmid n \textrm{ for } p \in (2,L) \textrm{ and } 4\nmid n \\ 0 & \textrm{otherwise} \end{cases}\]
\[ g_2(n) = 2^{- \Omega(n)} \]
\[g_3(n) =\begin{cases} 4^{-\Omega(n)} & \textrm{if }p \nmid n \textrm{ for } p \in (2,L) \textrm{ and } 4 \nmid n  \\ 0 & \textrm{otherwise} \end{cases}\]
and the smooth function \[\mathcal S(u,v) =4^{ (1+\epsilon) \log \log (u+v)}  \frac{1}{u(u+v)} \left( (\log (u+v))^{-\frac{3}{4}} (\log L)^{- \frac{1}{4}}  + \Bigl(\log \frac{3N}{u (u+v)}\Bigr)^{- \frac{3}{4}}\right) .\]
Then we can rephrase Lemma \ref{inner-sum-bound} as 
\begin{equation}\label{inner-sum-eq} \abs{ \{a\in S \mid u(u+v) \mid 2a\}} \ll    2^{\Omega(v)-\Omega(u)} g_1(u) g_2(v) g_3(u+v) N\mathcal S(u,v).\end{equation}

The crucial step is the bound for a sum of $g_1(u) g_2(v) g_3(u+v) $ over dyadic intervals.

\begin{lemma}\label{triple-multiplicative-bound}  For $ X \geq \frac{L}{2(1+\delta^{-1}}$ and, separately, $X$ sufficiently large, we have 
\[ \sum_{\substack{ u \in [X,2X] \\ 0<v \leq u \delta^{-1} \\ (u,v) \textrm{coprime}}} g_1(u) g_2(v) g_3(u+v)  \ll   X^2  (\log X)^{ -\frac{7}{4}} (\log L)^{-\frac{3}{4} }.\]
\end{lemma}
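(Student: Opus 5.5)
This is a sum of the form $\sum_{u,v} F_1(u)F_2(v)F_3(u+v)$ with nonnegative multiplicative $F_i$, which is the setting of the correlation bound of de la Bret\'eche and Tenenbaum~\cite{LBT} mentioned in the introduction. I will first remove the dependence between the three arguments, and then check that the heuristic Euler-product prediction gives the stated power of $\log$. The two sides are governed by the densities of the three functions:
\begin{itemize}
\item $g_1$ has density $\tfrac12$ on primes $\geq L$ and vanishes on primes in $(2,L)$, so its average over $[X,2X]$ is $\asymp (\log X)^{-1/2}(\log L)^{-1/2}$;
\item $g_2$ has density $\tfrac12$ on all primes, so its average over $v \ll X$ is $\asymp (\log X)^{-1/2}$;
\item $g_3$ has density $\tfrac14$ on primes $\geq L$ and vanishes on $(2,L)$, so its average is $\asymp (\log X)^{-3/4}(\log L)^{-1/4}$.
\end{itemize}
Multiplying the three averages gives $(\log X)^{-7/4}(\log L)^{-3/4}$, times the $\asymp X^2$ pairs $(u,v)$ with $u\in[X,2X]$ and $v\le u\delta^{-1}$. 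This is exactly the claimed bound, so the lemma asserts that $g_1(u),g_2(v),g_3(u+v)$ behave independently up to constants. The coprimality of $u,v$ ensures that at each prime $p$ at most one of $u,v,u+v$ is divisible by $p$, so no local correlations inflate the product.

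Concretely, I would put $w=u+v\in[X,(2+2\delta^{-1})X]$ and bound $\sum_{u,w} g_1(u)g_2(w-u)g_3(w)$ over $u\in[X,2X]$ and $u<w\le(1+\delta^{-1})u$. This is a sum of a product of multiplicative functions evaluated at the linear forms $u$, $w-u$, $w$, which are pairwise linearly independent, over a box of side $\asymp_\delta X$. I would apply the bound of \cite{LBT} for $\sum_{\mathbf x\in\mathcal K}\prod_j F_j(L_j(\mathbf x))$ with $F_j$ nonnegative, submultiplicative-type and of polynomial growth on prime powers. Its output is $\ll X^2\prod_{p\le X}\rho(p)$, where $\rho(p)$ is the local density $p^{-2}\sum_{\mathbf x\bmod p}\prod_j$ (local factor of $F_j$ at $p$). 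With coprimality built in (or simply dropping it, since the functions are nonnegative), at a prime $p\ge L$ one has
\[
\rho(p)=1+\frac{g_1(p)-1}{p}+\frac{g_2(p)-1}{p}+\frac{g_3(p)-1}{p}+O(p^{-2})=1-\frac{7/4}{p}+O(p^{-2}).
\]
For $2<p<L$ the factors of $g_1,g_3$ kill every residue class with $p\mid u$ or $p\mid w$, which gives $\rho(p)=1-\frac{2}{p}+\frac{1/2}{p}+O(p^{-2})=1-\frac{3/2}{p}+O(p^{-2})$. The prime $p=2$ contributes only a constant. By Mertens,
\[
\prod_{p<L}\Bigl(1-\tfrac{3}{2p}\Bigr)\prod_{L\le p\le X}\Bigl(1-\tfrac{7}{4p}\Bigr)\asymp(\log L)^{-3/2}\Bigl(\tfrac{\log X}{\log L}\Bigr)^{-7/4}=(\log X)^{-7/4}(\log L)^{1/4}.
\]
This is off from the claim by a factor of $\log L$.

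The discrepancy comes from the independence heuristic within the small-prime range. It means I should recheck the bookkeeping and not drop coprimality for primes below $L$. For $p\in(2,L)$, the density at $p$ of the set where none of $u,v,u+v$ is divisible by $p$ is $1-3/p+O(p^{-2})$ when $g_2$ is also restricted. Since $g_2$ is not restricted, the correct density is $1-2/p+\frac{1}{2p}$, as above, for pairs with $p\nmid u$ and $p\nmid w$ and $p\mid v$ weighted by $\tfrac12$. I then need to recompute whether the claimed $(\log L)^{-3/4}$ is consistent, by rederiving the heuristic as the product of averages: $(\log L)^{-1/2}\cdot 1\cdot(\log L)^{-1/4}=(\log L)^{-3/4}$ in the $L$-aspect. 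Comparing this with the local computation will show where the cross term $+\tfrac{1}{2p}$ must be handled, namely by noting that $g_2(p^k)\le 1$ and that we may bound $g_2\le 1$ at primes below $L$ to simplify.

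The main obstacle is verifying the hypotheses of \cite{LBT} uniformly in $L$. The implied constant must not depend on $L$, since the lemma tracks the $L$-dependence explicitly and the sieve at primes below $L$ is what produces the $(\log L)^{-3/4}$ saving. This also explains the restriction $X\ge L/(2(1+\delta^{-1}))$: it makes $w=u+v\gtrsim L$, so the small primes lie inside the range $p\le X$ of the Euler product. To handle this I would absorb the condition ``$p\nmid n$ for $p\in(2,L)$'' into the multiplicative functions themselves. These remain $1$-bounded and supported in a way the theorem permits, so its Euler product automatically captures the sieve factor. The bounded region $v\le u\delta^{-1}$ contributes only constants depending on $\delta$.
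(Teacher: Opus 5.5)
Your route is the paper's: apply \cite[Theorem 3.1]{LBT} to the forms $u$, $v$, $u+v$, with the sieve by primes in $(2,L)$ and the coprimality absorbed into $F$, and read the answer off the local densities. As written, though, the proposal does not prove the lemma. Your Euler product comes out as $(\log X)^{-7/4}(\log L)^{1/4}$, you note that it is off by $\log L$, and you leave the discrepancy unresolved. The discrepancy is a sign slip, not a failure of independence. For $2<p<L$ the classes $(u,v)\bmod p$ are weighted as follows:
the $(p-1)(p-2)$ classes with $p\nmid uv(u+v)$ get weight $1$; the $p-1$ classes with $p\mid v$ get weight about $\frac12$ (the $p$-adic average of $2^{-v_p(v)}$ over them is $\frac{p-1}{2p-1}$); and the classes with $p\mid u$ or $p\mid u+v$ get weight $0$. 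Hence
\[
\rho(p)=1-\frac{3}{p}+\frac{1}{2p}+O(p^{-2})=1-\frac{5}{2p}+O(p^{-2}).
\]
In your own formula the $v$-term is $(g_2(p)-1)/p=-\frac{1}{2p}$. You instead combined ``remove only the two classes $p\mid u$, $p\mid w$'' (giving $1-\frac{2}{p}$) with the add-back $+\frac{1}{2p}$, which belongs to the normalization in which all three classes have been removed. That is exactly how the paper splits it: $\prod_p\bigl(1-\frac{3p-2}{p^2}\bigr)$ removes three classes, and the local factor of $E_{\mathbf R}$ at $p\in(2,L)$ is $1+\frac{(1-p^{-1})^2}{2p-1}=1+\frac{1}{2p}+O(p^{-2})$. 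With the correct factor, Mertens gives $(\log L)^{-5/2}(\log X/\log L)^{-7/4}=(\log X)^{-7/4}(\log L)^{-3/4}$. This is the claim, and it agrees with your product-of-averages heuristic.

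Two further corrections. First, the remedy you float, bounding $g_2\le 1$ at primes below $L$, goes the wrong way. It turns $1-\frac{5}{2p}$ into $1-\frac{2}{p}$ and yields only $(\log X)^{-7/4}(\log L)^{-1/4}$. That is too weak, because in Lemma~\ref{outer-sum-bound} three factors must cancel exactly: the $(\log L)^{-3/4}$ from this lemma, the $(\log L)^{-1/4}$ from $\mathcal S$, and the $\log L$ from $|S|^{-1}$. Second, uniformity in $L$, which you call the main obstacle, is automatic. For every $L$ the function $F$ satisfies $F(\mathbf a\mathbf b)\le F(\mathbf b)$, so it lies in the fixed class $\mathcal M_3(1,1,\epsilon)$, on which the bound of \cite{LBT} is applied with constants independent of $F$. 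All dependence on $L$ therefore sits in $E_{\mathbf R}$. The hypothesis $X\ge L/(2(1+\delta^{-1}))$ only ensures that the Euler product bounding $E_{\mathbf R}$ covers every prime below $L$.
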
 
\begin{proof}  This follows from \cite[Theorem 3.1]{LBT}. To apply \cite[Theorem 3.1]{LBT}, we must introduce notation and parameters, which we now explain.
Take $k=3, t=2$, and primitive polynomials $Q_1=X_1, Q_2=X_2, Q_3=X_1+X_2$. Then, in the notation of \cite[\S2]{LBT}, $Q =Q_1Q_2Q_3=  X_1X_2(X_1+X_2)$. We have $r=3$, $R_1= X_1, R_2 =X_2, R_3=X_1+X_2$ since the $R_h$ are defined as the irreducible factors of $Q$, and we have $Q_j = \prod_h R_h^{\gamma_{jh}}$ for $\gamma_{jh}=\delta_{jh}$. Because the matrix $\gamma_{jh}$ is the identity, we have $\hat{F}=F$. We have $g=\deg Q =3$. For prime $p$ we have \[ \rho_Q^+ (p) =  \sum_{\substack{ u, v\in [1,\dots p]^2\\ u v(u+v)\equiv 0 \bmod p}} 1=3p-2 .\] We let $\kappa(s)$ denote the product of primes dividing $s$ and let $\mathcal K(s_1,s_2,s_3)= \operatorname{lcm}(s_1\kappa(s_1),s_2\kappa(s_2),s_3\kappa(s_3))$. Then we have \[ \rho_{\mathbf R}^\# (s_1,s_2,s_3) =  \sum_{ \substack{ u,v\in [1,\dots, \mathcal K(s_1,s_2,s_3)]\\ s_1 \mid u, s_2 \mid v, s_3 \mid u+v \\  (u/s_1, s_1s_2s_3) = (v/s_2, s_1s_2s_3)= ((u+v)/s_3,s_1s_2s_3)=1 } }1 .\] We take arbitrary $\alpha \in (0,1), \beta \in (0,1)$, $A=1$, $B=1$, and $\epsilon>0$ sufficiently small (note that the $\epsilon$ fixed elsewhere in the paper is different from the $\epsilon$ of \cite[Theorem 3.1]{LBT}). Then 

\[ F(a_1,a_2,a_3) = \begin{cases} g_1(a_1) g_2(a_2)g_3(a_3) & a_1,a_2 \textrm{ coprime} \\ 0 & \textrm{otherwise} \end{cases}\] satisfies $ F( \mathbf a \mathbf b) \leq F(\mathbf a) F(\mathbf b) \leq F(\mathbf b)$ and thus satisfies the bound \[F(\mathbf a \mathbf  b) \leq \min \{ A ^{\Omega(a_1a_2a_3)},B (a_1a_2a_3)^\epsilon\} F(\mathbf b)\] defining $\mathcal M_k(A,B,\epsilon)$. We take $x_1=y_1=2X, x_2=y_2= 2 (1+\delta^{-1}) X$ so that the range of summation $[x_1-y_1,x_1]\times [x_2-y_2,x_2]$ of \cite[Theorem 3.1]{LBT} includes our range of summation. We have $x=\min x_i = 2X$. Then for $X$ sufficiently large, $x_1,x_2,y_1,y_2$ satisfy the hypotheses of \cite[Theorem 3.1]{LBT}. By definition and multiplicativity of $F$ we have
\[ E_{\mathbf R}( x_1+x_2) = E_{\mathbf R} (2 (2+\delta^{-1}) X) = \sum_{ \substack{ s_1,s_2,s_3>0 \\ s_1s_2 s_3 \leq 2 (2+\delta^{-1} )X}} F(s_1,s_2,s_3) \frac{ \rho_{\mathbf R}^\# (s_1,s_2,s_3)}{\mathcal K(s_1,s_2,s_3)^2} \]
\[ \leq \prod_{p \leq 2 (2+\delta^{-1}) X} \sum_{ e_1,e_2,e_3\geq 0} F(p^{e_1} , p^{e_2}, p^{e_3}) \frac{ \rho_{\mathbf R}^\# (p^{e_1},p^{e_2},p^{e_3} )}{ (p^{\max(e_1,e_2,e_3)+1 - 1_{e_1=e_2=e_3=0}})^2}.\]

The term  $F(p^{e_1} , p^{e_2}, p^{e_3})$ always vanishes if $\min(e_1,e_2)>0$ and thus in particular if $\min(e_1,e_2,e_3)>0$.  Thus, when calculating $\rho_{\mathbf R}^\#$, we restrict attention to the case $\min(e_1,e_2,e_3)=0$.

We have $\frac{ \rho_{\mathbf R}^\# (p^{e_1},p^{e_2},p^{e_3} )}{ (p^{\max(e_1,e_2,e_3)+1 - 1_{e_1=e_2=e_3=0}})^2} =1$ if $e_1=e_2=e_3=0$ and otherwise (as long as $\min(e_1,e_2,e_3)=0$)
\[ \frac{ \rho_{\mathbf R}^\# (p^{e_1},p^{e_2},p^{e_3} )}{ (p^{\max(e_1,e_2,e_3)+1 - 1_{e_1=e_2=e_3=0}})^2 } =  \frac{1}{ (p^{ \max(e_1,e_2,e_3)+1})^2}\sum_{ \substack{ u,v\in [1,\dots, p^{\max(e_1,e_2,e_3)+1} ]\\ v_p(u)=e_1, v_p(v)=e_2, v_p(u+v)=e_3}} 1\]
\[ = \begin{cases} 0 & \textrm{if two of }e_1,e_2,e_3\textrm{ are greater than } 0 \\  \frac{(1-p^{-1})^2 }{ p^{\max(e_1,e_2,e_3)} }& \textrm{if  one of }e_1,e_2,e_3\textrm{ is greater than } 0 \end{cases} .\]
For $p\geq L$ we have \[F(p^{e_1}, p^{e_2}, p^{e_3}) = \begin{cases} 2^{-e_1} 2^{-e_2} 4^{-e_3} & \textrm{if }\min (e_1,e_2)=0 \\ 0 & \textrm{otherwise} \end{cases} \] so that we have 
\[  \sum_{ e_1,e_2,e_3\geq 0} F(p^{e_1} , p^{e_2}, p^{e_3}) \frac{ \rho_{\mathbf R}^\# (p^{e_1},p^{e_2},p^{e_3} )}{ (p^{\max(e_1,e_2,e_3)+1 - 1_{e_1=e_2=e_3=0}})^2}\]\[=
1 + \frac{(1-p^{-1})^2}{2p-1}  + \frac{(1-p^{-1})^2 }{2p-1} + \frac{(1-p^{-1})^2 }{4p-1}  = 1 + \frac{5}{4p} + O( \frac{1}{p^2} )\]
where we repeatedly use the geometric series evaluation $\sum_{e=1}^\infty r^{-e} =\frac{1}{r-1}$. 

For $p\in (2,L)$ we have $F(p^{e_1},p^{e_2},p^{e_3})=0$ unless $e_1=e_3=0$ and equals $2^{-e_2}$ in that case, so that
\[  \sum_{ e_1,e_2,e_3\geq 0} F(p^{e_1} , p^{e_2}, p^{e_3}) \frac{ \rho_{\mathbf R}^\# (p^{e_1},p^{e_2},p^{e_3} )}{ (p^{\max(e_1,e_2,e_3)+1 - 1_{e_1=e_2=e_3=0}})^2}=1 + \frac{(1-p^{-1})^2}{ 2p-1} = 1+ \frac{1}{2p} + O(\frac{1}{p^2})\]
and for $p=2$ we have $F(p^{e_1}, p^{e_2} ,p^{e_3})=0$ unless $e_1,e_3\leq 1$ and $\min(e_1,e_2)= 0$ and equals $2^{-e_1} 2^{-e_2} 4^{-e_3}$ when those conditions are both satisfied  so that
\[  \sum_{ e_1,e_2,e_3\geq 0} F(p^{e_1} , p^{e_2}, p^{e_3}) \frac{ \rho_{\mathbf R}^\# (p^{e_1},p^{e_2},p^{e_3} )}{ (p^{\max(e_1,e_2,e_3)+1 - 1_{e_1=e_2=e_3=0}})^2} \ll 1.\]
Thus
\[ E_{\mathbf R}( x_1x_2) \ll \prod_{p<L} (1 +  \frac{1}{2}p^{-1})  \prod_{L \leq p \leq 2 (2+\delta^{-1}) X} (1+ \frac{5}{4} p^{-1}) \ll  (\log X)^{\frac{5}{4}} (\log L)^{- \frac{3}{4}}\]
as long as $X \geq \frac{L}{2 (1+\delta^{-1})}$, which implies $L \leq 2 (2+\delta^{-1}) X$. 
 
 Finally we have \cite[Theorem 3.1]{LBT} which gives
 \[\sum_{ \substack{ u \in [X,2X] \\ 0<v \leq u \delta^{-1} \\ (u,v) \textrm{coprime}}} g_1(u) g_2(v) g_3(u+v)  \]\[\leq \sum_{0 < u  \leq 2X, 0 < v\leq 2(1+\delta^{-1}) X} F(u,v,u+v) \ll X^2E_{\mathbf R} (4 (1+\delta^{-1}) X^2)\prod_{3<p<2X}  \left(1 - \frac{3p-2}{p^2} \right)\]
 \[ \ll  X^2 (\log X)^{\frac{5}{4}} (\log L)^{- \frac{3}{4}} (\log X)^{-3}= X^2  (\log X)^{ -\frac{7}{4}} (\log L)^{-\frac{3}{4} }. \qedhere\]\end{proof}

\begin{lemma}\label{outer-sum-bound} We have 
\[\sum_{\substack{u,v>0 \\ \textrm{coprime}\\ (u,v) \neq (1,1) \\ v \leq u \delta^{-1} \\ \Omega(v) \leq \Omega(u)}} \frac{ \abs{ \{a\in S \mid u(u+v) \mid 2a\}}}{ \abs{S}} < \frac{1}{2}\]
as long as $\epsilon$ is sufficiently small, $L$ is sufficiently large with respect to $\epsilon$ and $\delta$, and $N$ is sufficiently large with respect to $L$. \end{lemma}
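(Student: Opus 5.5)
Combine \eqref{inner-sum-eq} with Lemma \ref{S-size}. This gives
\[\frac{\abs{\{a\in S\mid u(u+v)\mid 2a\}}}{\abs{S}} \ll \log L \cdot 2^{\Omega(v)-\Omega(u)} g_1(u)g_2(v)g_3(u+v)\,\mathcal S(u,v),\]
with the implied constant depending on $\delta$. The condition $\Omega(v)\le\Omega(u)$ gives $2^{\Omega(v)-\Omega(u)}\le 1$, so the weight can simply be dropped. Only pairs with $u(u+v)\le 2N$ contribute, by Lemma \ref{inner-sum-bound}. If $u=1$ is allowed, then $u+v$ must avoid primes in $(2,L)$ and $4$, so $u+v\ge L$ or $u+v=2$; the latter forces $(u,v)=(1,1)$, which is excluded. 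In general $u(u+v)\ge L$ or the term vanishes. Since $v\le u\delta^{-1}$, we have $u+v\asymp_\delta u$ and $u(u+v)\asymp X^2$ for $u\in[X,2X]$.

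\textbf{Step 1: dyadic decomposition.} Split $u$ into dyadic ranges $[X,2X]$. On such a range
\[\mathcal S(u,v)\ll X^{-2}\,4^{(1+\epsilon)\log\log X}\left((\log X)^{-3/4}(\log L)^{-1/4}+\Bigl(\log\frac{3N}{X^2}\Bigr)^{-3/4}\right),\]
and $4^{(1+\epsilon)\log\log X}=(\log X)^{(1+\epsilon)\log 4}$. Apply Lemma \ref{triple-multiplicative-bound} to the sum of $g_1g_2g_3$, which is valid once $X\gg_\delta L$. Smaller $X$ contribute nothing, since $u(u+v)\ge L$ forces $X\gg_\delta\sqrt L$. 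I would handle the intermediate range $\sqrt L\ll X\ll L$ by replacing the lower bound $(\log L)^{-3/4}$ with the trivial bound; there $\log X\asymp\log L$, so it causes no harm. The contribution of the block is then
\[\ll \log L\cdot (\log X)^{(1+\epsilon)\log4-7/4}(\log L)^{-3/4}\left((\log X)^{-3/4}(\log L)^{-1/4}+\Bigl(\log\frac{3N}{X^2}\Bigr)^{-3/4}\right).\]
Since $\log 4\approx1.386$, the exponent is $\theta:=(1+\epsilon)\log4-7/4\approx-0.364+1.386\epsilon$.

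\textbf{Step 2: the first term.} Summing over dyadic $X=2^j$ with $2^j\gg\sqrt L$, we have $\log X\asymp j$. This gives $(\log L)^{0}\sum_{j\gg\log L} j^{\theta-3/4}$. For small $\epsilon$ we have $\theta-3/4<-1$, so the sum is $\ll(\log L)^{\theta+1/4}$, which tends to zero as $L\to\infty$ because $\theta+1/4<0$. This is where the choice of base $4$ is used.

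\textbf{Step 3: the second term, which I expect to be the main obstacle.} We must bound
\[(\log L)^{1/4}\sum_{\sqrt L\ll 2^j\le \sqrt{2N}} j^{\theta}\,\bigl(\log N-2j\log 2+O(1)\bigr)^{-3/4}.\]
For $j$ with $2^j\le N^{1/4}$ the log factor is $\asymp(\log N)^{-3/4}$, and $\sum_j j^\theta\ll(\log N)^{1+\theta}$, giving $(\log N)^{\theta+1/4}\to0$ as $N\to\infty$. For $j$ near $\tfrac12\log_2 N$, set $m=\log_2 N-2j$. Then $j^\theta\asymp(\log N)^\theta$ and $\sum_{m\le\log N}(m+1)^{-3/4}\ll(\log N)^{1/4}$, giving again $(\log N)^{\theta+1/4}$. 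Both pieces are $o(1)$ as $N\to\infty$ with $L$ fixed, even after the factor $(\log L)^{1/4}$.

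Choosing $\epsilon$ small so that $\theta+1/4<0$, then $L$ large, then $N$ large, makes the total $<1/2$. The delicate points are tracking the $\log L$ loss from $\abs{S}^{-1}$ against the $(\log L)^{-3/4-1/4}$ gains, and the low range of $X$ where Lemma \ref{triple-multiplicative-bound} needs $X\gg_\delta L$.
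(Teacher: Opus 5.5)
Your outline is the paper's argument. You combine \eqref{inner-sum-eq} with Lemma~\ref{S-size} (losing $\log L$), drop $2^{\Omega(v)-\Omega(u)}\le 1$, decompose $u$ dyadically, and multiply Lemma~\ref{triple-multiplicative-bound} by the maximum of $\mathcal S$ on each block. You then sum the two resulting terms, splitting the second at $X=N^{1/4}$. Your Steps 2 and 3 are correct and match the paper's exponent bookkeeping.

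The one step that does not hold up as written is your treatment of the range $\sqrt L\ll X\ll_\delta L$. Lemma~\ref{triple-multiplicative-bound} says nothing there. If ``trivial bound'' means discarding the factor $(\log L)^{-3/4}$, your bound for the first term on each block becomes $(\log L)^{3/4}(\log X)^{\theta-3/4}\asymp(\log L)^{\theta}$, with $\theta=(1+\epsilon)\log 4-\frac74$ as in your notation. Summed over the $\asymp\log L$ dyadic blocks in that range, this gives $\asymp(\log L)^{1+\theta}$, which grows with $L$ since $1+\theta>0$. You could rescue this by redoing the Euler product in Lemma~\ref{triple-multiplicative-bound} for $X<L$, which gives $(\log X)^{1/2}$ in place of $(\log X)^{5/4}(\log L)^{-3/4}$; the two are comparable when $\log X\asymp\log L$.

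No patch is needed, though, because the argument you gave for $u=1$ works for every $u$. Any contributing pair has $u+v\ge 3$, with $u+v$ prime to every odd $p<L$ and $4\nmid u+v$. So $u+v$ is not a power of $2$, hence has an odd prime factor $\ge L$, and $u+v\ge L$. Since $u+v\le(1+\delta^{-1})u\le 2(1+\delta^{-1})X$, this forces $X\ge L/(2(1+\delta^{-1}))$, which is exactly the hypothesis of Lemma~\ref{triple-multiplicative-bound}. This is how the paper disposes of small $X$. Your intermediate range is empty; weakening $u+v\ge L$ to $u(u+v)\ge L$ is what created the apparent difficulty.
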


\begin{proof} 
From Lemma \ref{inner-sum-bound} in the form \eqref{inner-sum-eq} and Lemma \ref{S-size} we get 
\[\sum_{\substack{u,v>0 \\ \textrm{coprime}\\ (u,v) \neq (1,1) \\ v \leq u \delta^{-1}\\ \Omega(v) \leq \Omega(u)}} \frac{ \abs{ \{a\in S \mid u(u+v) \mid 2a\}}}{ \abs{S}} \]
\[ \ll  \sum_{\substack{u,v>0 \\ \textrm{coprime}\\ (u,v) \neq (1,1) \\ v \leq u \delta^{-1}, u (u+ v)\leq 2N \\ \Omega(v) \leq \Omega(u)  }}  2^{\Omega(v) - \Omega(u)}  g_1(u) g_2(v) g_3(u+v)  \mathcal S(u,v)\frac{2}{   (1-\delta)  \prod_{p<L} (1-p^{-1})}  \]
\[\ll  \sum_{\substack{u,v>0 \\ \textrm{coprime}\\ (u,v) \neq (1,1) \\ v \leq u \delta^{-1} , u (u+ v)\leq 2N\\ \Omega(v) \leq \Omega(u)  }}  2^{\Omega(v) - \Omega(u)}  g_1(u) g_2(v) g_3(u+v) \mathcal S(u,v) \log L  \]
\[ \leq \sum_{\substack{u,v>0 \\ \textrm{coprime}\\ (u,v) \neq (1,1) \\ v \leq u \delta^{-1}, u (u+ v)\leq 2N }}   g_1(u) g_2(v) g_3(u+v)  \mathcal S(u,v) \log L  \]

If we restrict $u$ to a dyadic interval $[X,2X]$, so that $v \leq 2X\delta^{-1}$ and $u+v \in [X,2(1+\delta^{-1}) X]$,  then, since $\frac{3N}{u(u+v)}$ is bounded away from $1$ by the restriction $u(u+v)\leq 2N$, we have 
\begin{equation}\label{smooth-maximum} \mathcal S(u,v) \ll  (\log X)^{(1+\epsilon)\log 4} \frac{1}{X^2} \left( (\log X)^{-\frac{3}{4}} (\log L)^{- \frac{1}{4}}  + \Bigl(\log \frac{3N}{X^2}\Bigr)^{- \frac{3}{4}}\right). \end{equation}
 This sum vanishes if $X < \frac{L}{2 (1+\delta^{-1})}$ as in that case we have $u+v>2$ and $u+v < 2 (1+\delta^{-1}) X<L$ so $u+v$ is necessarily divisible by some odd $p<L$ or by $4$. So we may assume $X\geq\frac{L}{2 (1+\delta^{-1})}$ and thus by taking $L$ sufficiently large assume that $X$ is sufficiently large.  Multiplying the bound of Lemma \ref{triple-multiplicative-bound} by the maximum value \eqref{smooth-maximum} of $\mathcal S(u,v)$ and by $\log L$, we get
\[ \sum_{\substack{u,v>0 \\ \textrm{coprime}\\ (u,v) \neq (1,1), u (u+v) \leq 2N \\ v \leq u \delta^{-1} \\ u \in [X,2X] }}   g_1(u) g_2(v) g_3(u+v)  \mathcal S(u,v) \log L\]
\[ \ll X^2  (\log X)^{ -\frac{7}{4}} (\log L)^{-\frac{3}{4} }(\log X)^{(1+\epsilon)\log 4} \frac{1}{X^2} \left( (\log X)^{-\frac{3}{4}} (\log L)^{- \frac{1}{4}}  + \log (\frac{3N}{X^2})^{- \frac{3}{4}}\right) \log L\]
\[ \ll (\log X)^{(1+\epsilon) \log 4 -\frac{5}{2}}  +( \log  \frac{3N}{X^2})^{- \frac{3}{4}}  (\log X)^{(1+\epsilon) \log 4 -\frac{7}{4}}  (\log L)^{\frac{1}{4} }.\]
We sum with $X$ ranging over all powers of $2$ between $\frac{L}{2 (1+\delta^{-1})}$ and $\sqrt{2N}$. The key fact is that $\log 4 < \frac{3}{2}$ so that for $\epsilon$ sufficiently small we have $(1+\epsilon) \log 4 -\frac{5}{2}<-1$.

By the key fact, for the first term, the sum over powers of $2$ is convergent and thus is arbitrarily small for $L$ sufficiently large.

For the second term, when $\log X< (\log N)/4$ we have $\log N \ll \log (\frac{3N}{X^2})$ so the second term is bounded by   $(\log N)^{- \frac{3}{4}}  (\log X)^{(1+\epsilon) \log 4 -\frac{7}{4}}  (\log L)^{\frac{1}{4} }$. The exponent of $\log X$ is greater than $-1$ so when we sum this term over $X$ powers of $2$ with $\log X < (\log N)/4$ we obtain  $( \log N)^{- \frac{3}{4} + (1+\epsilon) \log 4 -\frac{7}{4}+1 }  (\log L)^{\frac{1}{4} }$. The exponent of $\log N$ is negative by the same key fact, so this can be made arbitrarily small by taking $N$ sufficiently large with respect to $L$. 

When $\log X> \log N/4$ we have $\log N \ll \log X$ and so the second term is \[\ll (\log \frac{3N}{X^2})^{- \frac{3}{4}}( \log N )^{(1+\epsilon) \log 4 -\frac{7}{4}}  (\log L)^{\frac{1}{4} }.\] The exponent of $(\log \frac{3N}{X^2})$ is greater than $-1$ so when we sum this term over the possible values of $X$ we obtain is $ (\log N) ^{-\frac{3}{4}+ 1+ (1+\epsilon) \log 4 -\frac{7}{4}}  (\log L)^{\frac{1}{4} }$ which by the same key fact has a negative power of $\log N$ and thus can be made arbitrarily small. \end{proof}

\begin{proof}[Proof of Theorem \ref{main}] (1) is clear since of any pair $a,b$, we must have $\Omega(a) \leq \Omega(b)$ or $\Omega(b) \leq \Omega(a)$, so if $a+b \mid 2ab$ it not possible for both $a$ and $b$ to be in $A_N$.

 By Lemma \ref{in-A-criterion}, any element $a\in S$ with $a\notin A_N$ must have $2a$ divisible by $u(u+v)$ for two coprime positive integers $u,v$ with $(u,v)\neq (1,1)$ and $v\leq u \delta^{-1}$ and $\Omega(v) \leq \Omega(u)$. By Lemma \ref{outer-sum-bound} it follows that $\frac{\abs{ S \setminus  A_N}}{\abs{S}}< \frac{1}{2}$ and thus that $\abs{A_N}> \frac{\abs{S}}{2}$ which by Lemma \ref{S-size} gives (2).\end{proof}

\bibliographystyle{plainurl}

\bibliography{references}

\end{document}